\newcommand{\subj}[1]{\par\noindent{\bf Mathematics Subject Classification 2010: }#1.}
\newcommand{\keyw}[1]{\par\noindent{\bf Keywords: }#1.}
\theoremstyle{definition}
\newtheorem{theorem}{Theorem}
\newtheorem{example}{Example}
\theoremstyle{remark}
\def\a{\alpha}
\def\t{\tau}
\def\p{\psi}
\def\LD{{^CD_{a+}^{\a,\p}}}
\def\RLD{{D_{a+}^{\a,\p}}}
\def\LI{{I_{a+}^{\a,\p}}}
\def\LII{{I_{a+}^{2\a,\p}}}
\title{Fractional differential equations with mixed boundary conditions}
\author{Ricardo Almeida\\
\texttt{ricardo.almeida@ua.pt}}
\date{\small{Center for Research and Development in Mathematics and Applications (CIDMA)\\
Department of Mathematics, University of Aveiro, 3810--193 Aveiro, Portugal}}
\begin{document}
\maketitle

\begin{abstract} In this paper, we discuss the  existence and uniqueness  of solutions of a boundary value problem for a fractional differential equation of order $\a\in(2,3)$, involving a general form of fractional derivative. First, we prove an equivalence between the Cauchy problem and the Volterra equation. Then, two results on the existence of solutions are proven, and we end with some illustrative examples.
\end{abstract}

\subj{26A33, 34A08, 47H10}

\keyw{Fractional differential equations, fractional calculus, fixed point theorems}


\section{Introduction}

The aim of this paper is to investigate the existence and uniqueness  of solutions for a nonlinear Fractional Differential Equation (FDE) involving the Caputo fractional derivative of a function $x$ with respect to another function $\p$:
\begin{equation}\label{FDE}
\left\{
\begin{array}{l}
\LD x(t)=f(t,x(t)), \quad t\in[a,b]\\
x(a)=x_a\\
x'(a)=x^1_a\\
x(b)=K\LI x(v),
\end{array}
\right.
\end{equation}
where $\a\in(2,3)$, $x_a,x^1_a,K,v\in\mathbb R$ with $v\in(a,b]$, $x\in C^2[a,b]$, $\LD x$ is the $\psi$-Caputo fractional derivative of $x$ (see \eqref{def:caputo}) and $f:[a,b]\times\mathbb R\to\mathbb R$ is a continuous function. We assume that the restriction
\begin{equation}\label{def:N}N:=\frac{2K}{\Gamma(3+\a)}(\p(v)-\p(a))^{2+\a}-(\p(b)-\p(a))^2\not=0\end{equation}
is satisfied.

Fractional calculus can be seen as a generalization of ordinary calculus, in the sense that derivatives and integrals of arbitrary real or complex order are defined \cite{Kilbas,Malinowska,Samko}. Given a function $x$ with domain $[a,b]$, the Riemann--Liouville fractional integral of $x$ of order $\a>0$ is given by the formula
$${I_{a+}^{\a}} x(t):=\frac{1}{\Gamma(\a)}\int_a^t (t-\t)^{\a-1}x(\t)\,d\t.$$
When $\a$ is an integer number, the fractional integral is simply an $n-$tuple integral. With respect to differentiation, two of the most common definitions are the Riemann--Liouville fractional derivative
$${D_{a+}^{\a}} x(t):=  \left(\frac{d}{dt}\right)^n {I_{a+}^{n-\a}}x(t)$$
and the Caputo fractional derivative
$${^CD_{a+}^{\a}} x(t):={D_{a+}^{\a}} \left[x(t)-\sum_{k=0}^{n-1}\frac{x^{(k)}(a)}{k!}(t-a)^k\right],$$
where $n=[\a]+1$.
If $\a\in\mathbb N$, then ${D_{a+}^{\a}} x(t)={^CD_{a+}^{\a}} x(t)= x^{(n)}(t)$.

However, these are not the only definitions, and we can find many more in the literature. To overcome this problem, one solution is to present general definitions and then study some of their properties. This can be done e.g. considering fractional derivatives and fractional integrals with respect to another function $\p$ \cite[Section 2.5]{Kilbas}, and for particular choices of $\p$, we recover e.g. the Riemann--Liouville, the Hadamard, and the Erd\'{e}lyi--Kober fractional derivatives and fractional integrals. In \cite{Almeida}, a Caputo-type fractional derivative of a function with respect to another function is presented, and some of its main properties were studied.
FDE is a branch of mathematics, where in the differential equations, ordinary derivatives are replaced by fractional operators (see the seminal papers \cite{Agarwal,Diethelm,Lakshmikantham}). Since there are several definitions for fractional derivatives, we find in the literature several papers dealings with similar subjects for different operators: the Riemann-Liouville \cite{Ahmad,Didgar,Zhang}, the Caputo \cite{Agrawal2,YXu,Qarout}, the Hadamard \cite{Li, Wang2}, the Erdelyi-Kober \cite{Saqabi,Wang3}, etc. Fractional differential equations have been used to model more efficiently some real life phenomena like viscoelasticity \cite{Kempfle}, electrical circuits \cite{Lokenath}, electromagnetism \cite{Engheta}, in chemistry \cite{Douglas}, sound propagation \cite{Fellah}, etc, and has attracted the attention of numerous researchers.

In this paper, we continue the work started in \cite{Almeida}, by considering a FDE of order $\a\in(2,3)$, with mixed boundary conditions.
The paper is organized as follows. Section \ref{sec:FC} presents the notions that we will need, as well as some basic properties
of the operators, and Section \ref{sec:FPT} presents the two results on the existence of solution for the FDE \eqref{FDE}, using the Banach and the Leray--Schauder  fixed point theorems.

\section{Review on fractional calculus}\label{sec:FC}

We deal with fractional derivatives and fractional integrals with respect to another function. To fix terminology, a real $\a>0$ is the order of the fractional operators, $x:[a,b]\to\mathbb R$ is an integrable function, and $\p:[a,b]\to\mathbb R$ is an increasing differentiable function such that $\p'(t)\not=0$, for all $t\in [a,b]$.

The $\p$-Riemann--Liouville fractional integral of $x$ is given by
$$\LI x(t):=\frac{1}{\Gamma(\a)}\int_a^t \p'(\t)(\p(t)-\p(\t))^{\a-1}x(\t)\,d\t,$$
and the $\p$-Riemann--Liouville fractional derivative of $x$ by
\begin{align*}\RLD x(t)&:=  \left(\frac{1}{\p'(t)}\frac{d}{dt}\right)^n {I_{a+}^{n-\a,\p}}x(t)\\
&=\frac{1}{\Gamma(n-\a)}\left(\frac{1}{\p'(t)}\frac{d}{dt}\right)^n\int_a^t \p'(\t)(\p(t)-\p(\t))^{n-\a-1}x(\t)\,d\t,\end{align*}
where $n=[\a]+1$.
The semigroup law is valid for fractional integrals, that is,
$$\LI {I_{a+}^{\beta,\p}}x(t)={I_{a+}^{\a+\beta,\p}}x(t), \quad \forall \a,\beta>0.$$
The fractional derivative that we will deal in our work is a Caputo type operator, and it is defined by the expression
\begin{equation}\label{def:caputo}\LD x(t):=\RLD \left[x(t)-\sum_{k=0}^{n-1}\frac{x^{[k]}_\p(a)}{k!}(\p(t)-\p(a))^k\right],\end{equation}
where $x\in C^{n-1}[a,b]$, $n=[\a]+1$ for $\a\notin\mathbb N$, $n=\a$ for $\a\in\mathbb N,$ and
$$x^{[k]}_\p(t):=\left(\frac{1}{\p'(t)}\frac{d}{dt}\right)^kx(t).$$
If $x$ is of class $C^n$, then the $\p$-Caputo fractional derivative of $x$ is given by (cf. \cite[Theorem 3]{Almeida})
$$\LD x(t)= {I_{a+}^{n-\a,\p}}\left(\frac{1}{\p'(t)}\frac{d}{dt}\right)^nx(t).$$
Thus, if $\alpha=m\in\mathbb N$, we have
$$\LD x(t)=x^{[m]}_\p (t),$$
and for $\a\notin\mathbb N$, we have
$$\LD x(t)=\frac{1}{\Gamma(n-\a)}\int_a^t \p'(\t)(\p(t)-\p(\t))^{n-\a-1}x^{[n]}_\p (\t)\,d\t.$$
One important function in fractional calculus is the Mittag--Leffler function, which is  a generalization of the factorial function to real numbers, and it is given by $$E_\a(z):=\sum_{k=0}^\infty\frac{z^k}{\Gamma(k\a+1)}.$$
The derivative of a power function is given by
$$\LD (\p(t)-\p(a))^{\beta-1}=\frac{\Gamma(\beta)}{\Gamma(\beta-\a)}(\p(t)-\p(a))^{\beta-\a-1},$$
where  $\beta\in\mathbb R$ with $\beta>n$, and for $\lambda\in\mathbb R$, the derivative of the Mittag-Leffler function is
$$\LD E_\a(\lambda(\p(t)-\p(a))^\a)=\lambda E_\a(\lambda(\p(t)-\p(a))^\a).$$
The relation between fractional integrals and fractional derivatives is given next.

\begin{theorem}\label{relation}\cite{Almeida2} Let $x:[a,b]\to\mathbb R$ be a function.
\begin{enumerate}
\item\label{item1} If $x$ is continuous, then
$$\LD \LI x(t)=x(t).$$
\item\label{item2} If $x$ is of class $C^{n-1}$, then
$$\LI \LD x(t)=x(t)-\sum_{k=0}^{n-1}\frac{x^{[k]}_\p(a)}{k!}(\p(t)-\p(a))^k.$$
\end{enumerate}
\end{theorem}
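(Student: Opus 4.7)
The plan is to reduce both statements to classical Riemann--Liouville fractional calculus via the substitution $u=\p(\t)$. Setting $g:=x\circ\p^{-1}$ on $[\p(a),\p(b)]$, a direct change of variable in the defining integral shows that $\LI x(t)$ equals the ordinary Riemann--Liouville integral $I_{\p(a)+}^{\a}g$ evaluated at $\p(t)$. Moreover, the chain rule yields $(\tfrac{1}{\p'(t)}\tfrac{d}{dt})^k F(\p(t))=F^{(k)}(\p(t))$ for any smooth $F$, so $\p$-differentiation corresponds to ordinary differentiation of $g$ in the variable $u=\p(t)$. This dictionary converts every $\p$-operator into its standard counterpart acting on $g$, after which one can invoke the well-known identities for the Riemann--Liouville case.

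For part \ref{item1}, I would first observe that if $x$ is continuous then $|\LI x(t)|\le C(\p(t)-\p(a))^{\a}$, and since each application of $\tfrac{1}{\p'(t)}\tfrac{d}{dt}$ to $(\p(t)-\p(a))^\beta$ lowers the exponent by one, the iterates $[\LI x]^{[k]}_\p(a)$ vanish for all $k=0,\dots,n-1$ (since $k<\a$ in that range). Consequently the Caputo correction terms drop out and $\LD\LI x=\RLD\LI x$. By the semigroup law $I_{a+}^{n-\a,\p}\LI x=I_{a+}^{n,\p}x$, so the proof reduces to verifying $(\tfrac{1}{\p'(t)}\tfrac{d}{dt})^n I_{a+}^{n,\p}x(t)=x(t)$, which under the change of variable is the classical identity $(d/du)^n I_{\p(a)+}^{n}g(u)=g(u)$ from the fundamental theorem of calculus.

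For part \ref{item2}, the cleanest situation is $x\in C^n$: the formula $\LD x=I_{a+}^{n-\a,\p}(\tfrac{1}{\p'}\tfrac{d}{dt})^n x$ recorded earlier in the excerpt, combined with the semigroup law, gives $\LI\LD x=I_{a+}^{n,\p}(\tfrac{1}{\p'}\tfrac{d}{dt})^n x$. Translating to $g$, this becomes $\tfrac{1}{(n-1)!}\int_{\p(a)}^{\p(t)}(\p(t)-u)^{n-1}g^{(n)}(u)\,du$, which is Taylor's integral remainder and equals $g(\p(t))-\sum_{k=0}^{n-1}\tfrac{g^{(k)}(\p(a))}{k!}(\p(t)-\p(a))^k$, matching the right-hand side of the claim once one recognizes $g^{(k)}(\p(a))=x^{[k]}_\p(a)$.

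The main obstacle is that the theorem only assumes $x\in C^{n-1}$, which is weaker than the $C^n$ hypothesis used above and does not license the formula $\LD x=I_{a+}^{n-\a,\p}x^{[n]}_\p$. To close this gap I would write $\LD x=\RLD z$ with $z:=x-\sum_{k=0}^{n-1}\tfrac{x^{[k]}_\p(a)}{k!}(\p(t)-\p(a))^k$, and verify $\LI\RLD z=z$ directly. The key is to show that $I_{a+}^{n-\a,\p}z(t)$ vanishes to sufficient order at $a$, so that the boundary terms arising from the repeated application of $\tfrac{1}{\p'}\tfrac{d}{dt}$ on the inner integral drop out and leave $(\tfrac{1}{\p'}\tfrac{d}{dt})^n I_{a+}^{n-\a,\p}z = z$ after undoing the two operations; this vanishing follows from the $n$ conditions $z^{[k]}_\p(a)=0$ for $k\le n-1$ combined with a size estimate on $I_{a+}^{n-\a,\p}$ applied to such a function.
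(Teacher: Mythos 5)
First, a structural remark: the paper never proves this theorem at all --- it is imported verbatim from \cite{Almeida2} --- so there is no in-paper argument to compare yours against, and your proposal must stand on its own. Your overall strategy, transporting everything to classical Riemann--Liouville calculus through $g:=x\circ\p^{-1}$, is sound and is indeed the standard device for $\p$-operators: the identities $\LI x(t)=I_{\p(a)+}^{\a}g(\p(t))$ and $\bigl(\frac{1}{\p'(t)}\frac{d}{dt}\bigr)^{k}F(\p(t))=F^{(k)}(\p(t))$ hold because $\p$ is increasing and differentiable with $\p'\neq0$. With this dictionary, your treatment of item \ref{item1}, and of item \ref{item2} under the stronger hypothesis $x\in C^{n}$, is correct. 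One small repair in item \ref{item1}: you cannot literally ``differentiate the bound'' $|\LI x(t)|\le C(\p(t)-\p(a))^{\a}$; what you need (and what is true) is the identity $(\LI x)^{[k]}_{\p}={I_{a+}^{\a-k,\p}}x$ for $0\le k\le n-1$, valid since $\a-k>0$, after which the estimate $|(\LI x)^{[k]}_{\p}(t)|\le \|x\|(\p(t)-\p(a))^{\a-k}/\Gamma(\a-k+1)\to0$ as $t\to a$ kills the Caputo correction terms.

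The genuine soft spot is the closing $C^{n-1}$ argument, which you give only as a plan, and which as stated has a hole: smallness of $w:={I_{a+}^{n-\a,\p}}z$ near $a$ does not by itself control the boundary values $w^{[k]}_{\p}(a)$ appearing in the correction terms --- a function can be $O((t-a)^{3})$ at a point while its higher derivatives there fail to vanish or even to exist (e.g. $t^{3}\sin(t^{-3})$ at $0$). Two ways to close this. (i) Integrate by parts $n-1$ times in $w$, using $z\in C^{n-1}$ and $z^{[k]}_{\p}(a)=0$: every boundary term vanishes and $w={I_{a+}^{2n-\a-1,\p}}z^{[n-1]}_{\p}$, whence $w^{[k]}_{\p}={I_{a+}^{2n-\a-1-k,\p}}z^{[n-1]}_{\p}$ for $k\le n-1$; since $2n-\a-1-k\ge n-\a>0$, these all vanish at $t=a$. (ii) Invoke the hypothesis, implicit in the statement, that $\RLD z$ exists in the strong sense (i.e. ${I_{a+}^{n-\a,\p}}z$ is $n$-fold absolutely continuous in the $\p$-variable); then $w$ admits a Taylor expansion at $a$ with coefficients $w^{[k]}_{\p}(a)$, and your size estimate $w(t)=o\bigl((\p(t)-\p(a))^{n-1}\bigr)$ forces all of them to be zero. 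Either route feeds into the classical inversion formula, transported by your dictionary, $\LI\RLD z=z-\sum_{j=1}^{n}\frac{w^{[n-j]}_{\p}(a)}{\Gamma(\a-j+1)}(\p(t)-\p(a))^{\a-j}$, whose correction terms now vanish, giving the claim. With that step written out --- and with the explicit caveat that for $x$ merely of class $C^{n-1}$ the derivative $\LD x$ need not exist, so its existence is part of the hypotheses --- your proof is complete.
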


We also refer to \cite{Almeida}, where Theorem 1 is formulated for another class of functions. For our work, we need the following result.

\begin{theorem} For $\beta>0$ and $\lambda\in\mathbb R\setminus \{0\}$, we have the two following formulas:
$$\LI (\p(t)-\p(a))^{\beta-1}=\frac{\Gamma(\beta)}{\Gamma(\beta+\a)}(\p(t)-\p(a))^{\beta+\a-1}$$
and
$$\LI E_\a(\lambda(\p(t)-\p(a))^\a)=\frac1\lambda \left(E_\a(\lambda(\p(t)-\p(a))^\a)-1\right).$$
\end{theorem}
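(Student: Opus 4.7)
The two formulas can be handled by direct computation, reducing the $\psi$-integrals to classical Beta-function integrals.

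\textbf{First formula.} I would start from the definition
\[
\LI(\p(t)-\p(a))^{\beta-1}=\frac{1}{\Gamma(\a)}\int_a^t \p'(\t)(\p(t)-\p(\t))^{\a-1}(\p(\t)-\p(a))^{\beta-1}\,d\t
\]
and perform the substitution $u=(\p(\t)-\p(a))/(\p(t)-\p(a))$, which gives $du=\p'(\t)\,d\t/(\p(t)-\p(a))$, $\p(\t)-\p(a)=u(\p(t)-\p(a))$, and $\p(t)-\p(\t)=(1-u)(\p(t)-\p(a))$, with limits $u:0\to 1$. Collecting powers of $\p(t)-\p(a)$ pulls a factor $(\p(t)-\p(a))^{\a+\beta-1}$ outside the integral, leaving $\int_0^1 u^{\beta-1}(1-u)^{\a-1}du=B(\beta,\a)=\Gamma(\beta)\Gamma(\a)/\Gamma(\beta+\a)$. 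The $\Gamma(\a)$ cancels with the prefactor, yielding the claim.

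\textbf{Second formula.} I would expand the Mittag--Leffler function as the power series
\[
E_\a(\lambda(\p(t)-\p(a))^\a)=\sum_{k=0}^{\infty}\frac{\lambda^k(\p(t)-\p(a))^{k\a}}{\Gamma(k\a+1)}
\]
and apply $\LI$ termwise. Each term matches the first formula with $\beta=k\a+1$, so
\[
\LI(\p(t)-\p(a))^{k\a}=\frac{\Gamma(k\a+1)}{\Gamma(k\a+\a+1)}(\p(t)-\p(a))^{k\a+\a}.
\]
The $\Gamma(k\a+1)$ factors cancel, and after reindexing $j=k+1$ the series becomes
\[
\frac{1}{\lambda}\sum_{j=1}^{\infty}\frac{\lambda^j(\p(t)-\p(a))^{j\a}}{\Gamma(j\a+1)}=\frac{1}{\lambda}\bigl(E_\a(\lambda(\p(t)-\p(a))^\a)-1\bigr).
\]

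\textbf{Potential obstacle.} The only delicate point is the termwise application of $\LI$ to the Mittag--Leffler series. Since the series for $E_\a$ is entire, it converges uniformly on every compact subset of $[a,b]$, and the factor $\p'(\t)(\p(t)-\p(\t))^{\a-1}$ is integrable on $[a,t]$; dominated convergence (or uniform convergence on compacta combined with integrability of the kernel) justifies interchanging the sum and the integral. Once this is observed, the computation is purely algebraic.
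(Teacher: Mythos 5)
Your proof is correct and follows essentially the same route as the paper: the Beta-function substitution $u=(\p(\t)-\p(a))/(\p(t)-\p(a))$ for the power function, and termwise application of $\LI$ to the Mittag--Leffler series with reindexing for the second formula. The only difference is that you explicitly justify the interchange of sum and integral, a point the paper passes over in silence.
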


\begin{proof} Direct computations lead to
\begin{align*} \LI (\p(t)-\p(a))^{\beta-1} & = \frac{1}{\Gamma(\a)}\int_a^t \p'(\t)(\p(t)-\p(\t))^{\a-1}(\p(\t)-\p(a))^{\beta-1}\,d\t\\
& = \frac{1}{\Gamma(\a)}(\p(t)-\p(a))^{\a-1}\int_a^t \p'(\t)\left(1-\frac{\p(\t)-\p(a)}{\p(t)-\p(a)}\right)^{\a-1}(\p(\t)-\p(a))^{\beta-1}\,d\t\\
& = \frac{1}{\Gamma(\a)}(\p(t)-\p(a))^{\beta+\a-1}\int_0^1(1-u)^{\a-1}u^{\beta-1}\,du\\
& = \frac{1}{\Gamma(\a)}(\p(t)-\p(a))^{\beta+\a-1}\frac{\Gamma(\beta)\Gamma(\a)}{\Gamma(\beta+\a)}
=\frac{\Gamma(\beta)}{\Gamma(\beta+\a)}(\p(t)-\p(a))^{\beta+\a-1}.
\end{align*}
For the second formula,
\begin{align*}
\LI E_\a(\lambda(\p(t)-\p(a))^\a)&= \sum_{k=0}^\infty\frac{\lambda^k}{\Gamma(k\a+1)}\LI (\p(t)-\p(a))^{k\a}\\
&= \sum_{k=0}^\infty\frac{\lambda^k}{\Gamma(k\a+\a+1)}(\p(t)-\p(a))^{k\a+\a}\\
& =\frac1\lambda \sum_{k=1}^\infty\frac{\lambda^k}{\Gamma(k\a+1)} (\p(t)-\p(a))^{k\a}
=\frac1\lambda \left(E_\a(\lambda(\p(t)-\p(a))^\a)-1\right).
\end{align*}
\end{proof}


\section{Main Results}\label{sec:FPT}

Our next result establishes an equivalence between problem \eqref{FDE} and a Volterra integral equation.

\begin{theorem}\label{equiv}
Problem \eqref{FDE} is equivalent to the integral equation
\begin{equation}\label{FDEInt}x(t)=\LI f(t,x(t))+x_a+\frac{x^1_a}{\p'(a)}(\p(t)-\p(a))+\gamma_x(\p(t)-\p(a))^2, \quad t\in[a,b],\end{equation}
where
$$\gamma_x:=\Big[\LI f(b,x(b))-K \LII f(v,x(v))+x_a\Big(1-\frac{K}{\Gamma(1+\a)}(\p(v)-\p(a))^\a\Big)$$
\begin{equation}\label{gammax} +\frac{x^1_a}{\p'(a)}\Big(\p(b)-\p(a)-\frac{K}{\Gamma(2+\a)}(\p(v)-\p(a))^{1+\a}\Big)\Big] / N.\end{equation}
\end{theorem}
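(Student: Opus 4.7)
The plan is to argue both implications of the equivalence, using Theorem~\ref{relation} as the main lever; note that $n=3$ throughout, since $\a\in(2,3)$.

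For the forward implication, suppose $x$ solves \eqref{FDE}. Apply $\LI$ to both sides of $\LD x(t)=f(t,x(t))$ and invoke Theorem~\ref{relation}(\ref{item2}) to obtain
$$x(t) = \LI f(t,x(t)) + x_a + \frac{x^1_a}{\p'(a)}(\p(t)-\p(a)) + \frac{x^{[2]}_\p(a)}{2}(\p(t)-\p(a))^2,$$
using that $x^{[0]}_\p(a)=x_a$ and $x^{[1]}_\p(a)=x'(a)/\p'(a)=x^1_a/\p'(a)$ by the initial data. Writing $\gamma_x:=x^{[2]}_\p(a)/2$ puts $x$ in the form \eqref{FDEInt}. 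I then determine $\gamma_x$ by imposing the nonlocal condition $x(b)=K\LI x(v)$: expand $\LI x(v)$ termwise, using the semigroup law to fold $\LI\LI f$ into $\LII f$ and the power-function identity from the preceding theorem (with $\beta=1,2,3$) to handle the polynomial part. Equating with $x(b)$ read off from \eqref{FDEInt} yields a linear equation for $\gamma_x$ whose coefficient is $(\p(b)-\p(a))^2-(2K/\Gamma(3+\a))(\p(v)-\p(a))^{2+\a}=-N\neq 0$, so inverting produces exactly \eqref{gammax}.

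For the converse, suppose $x\in C^2[a,b]$ satisfies \eqref{FDEInt}--\eqref{gammax}. Setting $t=a$ in \eqref{FDEInt} yields $x(a)=x_a$, since each of the last three summands vanishes at the lower limit. Differentiating \eqref{FDEInt} once and evaluating at $t=a$ recovers $x'(a)=x^1_a$, because for $\a>1$ one has $(d/dt)\LI f(t,x(t))\big|_{t=a}=0$. Applying $\LD$ to \eqref{FDEInt}, Theorem~\ref{relation}(\ref{item1}) handles the $\LI f$ term; each of the polynomials $1$, $\p(t)-\p(a)$, $(\p(t)-\p(a))^2$ agrees with its own $\p$-Taylor polynomial of degree $\le n-1=2$ and is therefore annihilated by $\LD$ via the definition \eqref{def:caputo}, so $\LD x(t)=f(t,x(t))$. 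The boundary condition $x(b)=K\LI x(v)$ is then automatic, because the formula \eqref{gammax} for $\gamma_x$ was obtained from precisely that identity in the forward direction.

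The main obstacle is really just bookkeeping in the forward step: expanding $\LI x(v)$, collecting terms proportional to $\gamma_x$, and arranging the remaining constants into the numerator of \eqref{gammax} without sign or index errors.
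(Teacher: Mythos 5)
Your proposal is correct and follows essentially the same route as the paper's proof: apply $\LI$ and Theorem~\ref{relation} to obtain the integral-plus-polynomial form, identify $\gamma_0,\gamma_1$ from the initial data, determine $\gamma_x$ from the nonlocal condition via the semigroup law and the power-function formula (with coefficient $-N\neq 0$), and for the converse apply $\LD$ using that it annihilates $1$, $\p(t)-\p(a)$ and $(\p(t)-\p(a))^2$. If anything, your converse is slightly more thorough than the paper's, which checks only the differential equation and leaves the recovery of the initial and boundary conditions implicit.
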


\begin{proof} Since $\LD x(t)=f(t,x(t))$, applying the fractional integral to both sides of the equation and using Theorem \ref{relation}, we arrive at the relation
$$x(t)=\LI f(t,x(t))+\gamma_0+\gamma_1(\p(t)-\p(a))+\gamma_2(\p(t)-\p(a))^2,$$
for some constants $\gamma_0,\gamma_1,\gamma_2$.
Since $x(a)=x_a$ and $x'(a)=x^1_a$, we deduce that $\gamma_0=x_a$ and $\gamma_1=x^1_a/\p'(a)$. To determine $\gamma_2$, first observe that
\begin{align*}
\LI x(t) &= \LI\Big( \LI f(t,x(t))+x_a+\frac{x^1_a}{\p'(a)}(\p(t)-\p(a))+\gamma_2(\p(t)-\p(a))^2 \Big)\\
&=\LII f(t,x(t))+\frac{x_a}{\Gamma(1+\a)}(\p(t)-\p(a))^\a+\frac{x^1_a}{\p'(a)\Gamma(2+\a)}(\p(t)-\p(a))^{1+\a}\\
& \quad +\frac{2\gamma_2}{\Gamma(3+\a)}(\p(t)-\p(a))^{2+\a}.\\
\end{align*}
Since $x(b)=K\LI x(v)$, we determine the value of $\gamma_2$, thus ending the first part of the proof. For the converse, if $x$ satisfies equation \eqref{FDEInt}, applying the fractional derivative to both sides of the equation, and attending that
$$\LD x_a=\LD(\p(t)-\p(a))=\LD(\p(t)-\p(a))^2=0,$$
we obtain that $\LD x(t)=f(t,x(t))$.
\end{proof}

Motivated by Theorem \ref{equiv}, we define the functional $F$ by
\begin{equation}\label{F}F(x)(t):=\LI f(t,x(t))+x_a+\frac{x^1_a}{\p'(a)}(\p(t)-\p(a))+\gamma_x(\p(t)-\p(a))^2,\end{equation}
where $\gamma_x$ is given by \eqref{gammax}. In the following, given a function $y:[c,d]\to\mathbb R$, we consider the usual norm
$\|y\|:=\sup_{t\in[c,d]}|y(t)|.$

\begin{theorem}\label{BanachT} If there exists  an integrable function $L:[a,b]\to\mathbb R^+_0$ such that
$$|f(t,x_1)-f(t,x_2)|\leq L(t)|x_1-x_2|, \quad \forall t\in[a,b], \, \forall x_1,x_2\in\mathbb R,$$
 there exists then a real $h>0$ (with $h\leq b-a$) for which the FDE \eqref{FDE} has a unique solution in $[a,a+h]$.
\end{theorem}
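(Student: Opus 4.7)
The plan is to apply the Banach contraction principle to the operator $F$ from \eqref{F} on the complete space $C[a,a+h]$ equipped with the supremum norm, for a suitably small $h\in(0,b-a]$. By Theorem \ref{equiv}, fixed points of $F$ correspond exactly to solutions of \eqref{FDE}, so it is enough to produce a unique fixed point on some subinterval $[a,a+h]$.

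First I would check that $F$ maps $C[a,a+h]$ into itself, which follows from the continuity of $f$ together with the fact that the $\p$-Riemann--Liouville integral preserves continuity. The core of the argument is the Lipschitz-type estimate
\begin{equation*}
|F(x_1)(t)-F(x_2)(t)| \le |\LI(f(\cdot,x_1)-f(\cdot,x_2))(t)|+|\gamma_{x_1}-\gamma_{x_2}|\,(\p(t)-\p(a))^2.
\end{equation*}
The hypothesis on $f$ yields immediately
\begin{equation*}
|\LI(f(\cdot,x_1)-f(\cdot,x_2))(t)| \le \frac{\|x_1-x_2\|}{\Gamma(\a)}\int_a^t \p'(\t)(\p(t)-\p(\t))^{\a-1}L(\t)\,d\t,
\end{equation*}
and applying the same Lipschitz inequality to the two fractional integrals inside \eqref{gammax} produces a parallel bound on $|\gamma_{x_1}-\gamma_{x_2}|$. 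Gathering everything gives $\|F(x_1)-F(x_2)\|\le C(h)\,\|x_1-x_2\|$ for a constant $C(h)$ built from terms such as $(\p(a+h)-\p(a))^\a$, $(\p(a+h)-\p(a))^{2\a}$ and the $L^1$-mass of $L$ on $[a,a+h]$.

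The concluding step is to notice that $C(h)\to 0$ as $h\to 0^+$, using the integrability of $L$ and the continuity of $\p$ at $a$, so that one can choose $h$ small enough for $C(h)<1$. Banach's fixed point theorem then furnishes a unique $x\in C[a,a+h]$ with $F(x)=x$, and Theorem \ref{equiv} translates this into the unique solution of \eqref{FDE} on $[a,a+h]$.

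The main obstacle I anticipate is handling the $\gamma_x$ term, since as written in \eqref{gammax} it involves $x$ evaluated at the fixed points $b$ and $v$; for a genuinely local statement one reads $b$ and $v$ in \eqref{FDE} as the endpoints of the working interval $[a,a+h]$ (with $v\in(a,a+h]$), so that every fractional integral entering the contraction constant is over an interval of length at most $h$ and the factor $(\p(a+h)-\p(a))^2$ multiplying $|\gamma_{x_1}-\gamma_{x_2}|$ actively drives $C(h)$ down. Verifying that the denominator $N$ in \eqref{def:N} stays away from zero under this reinterpretation is the point where I would be most careful.
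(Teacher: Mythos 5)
Your overall strategy coincides with the paper's: apply the Banach fixed point theorem to the operator $F$ of \eqref{F}, use the Lipschitz hypothesis on $f$ to get the contraction estimate, and shrink $h$ until the contraction constant drops below $1$ (working in $C[a,a+h]$ instead of the paper's set $U$ is a harmless, even cleaner, variation). The divergence --- and the gap --- is exactly at the point you flagged: the treatment of $\gamma_x$. The paper does \emph{not} reinterpret $b$ and $v$: in its contraction condition \eqref{contraction}, the quantities $\LI L(b)$, $K\LII L(v)$ and $N$ remain the fixed data of problem \eqref{FDE}, and smallness is obtained because, as $h\to 0^+$, both $\sup_{t\in[a,a+h]}\LI L(t)$ (by absolute continuity of the integral of $L$) and $\sup_{t\in[a,a+h]}(\p(t)-\p(a))^2/N$ (because $N$ is a fixed nonzero constant) tend to zero. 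Your reinterpretation, replacing $b$ by $a+h$ and $v$ by some $v_h\in(a,a+h]$, destroys precisely this mechanism: the denominator becomes $N_h=\frac{2K}{\Gamma(3+\a)}(\p(v_h)-\p(a))^{2+\a}-(\p(a+h)-\p(a))^2$, and since $(\p(v_h)-\p(a))^{2+\a}\leq(\p(a+h)-\p(a))^{2+\a}=o\big((\p(a+h)-\p(a))^2\big)$, one has $N_h\sim-(\p(a+h)-\p(a))^2$ as $h\to 0^+$. Consequently $(\p(a+h)-\p(a))^2/|N_h|\to 1$: the quadratic factor that you claim ``actively drives $C(h)$ down'' is exactly cancelled by the shrinking denominator and contributes nothing. (The same asymptotics settles your stated worry in the affirmative --- $N_h<0$, in particular $N_h\neq 0$, for small $h$ --- but that was the easy part.) Your argument is salvageable only because the integrals $\LI L(a+h)$ and $\LII L(v_h)$, taken over intervals of length at most $h$, themselves tend to zero; that is a different reason from the one you give, and it is the step you left unverified.

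There is also a more basic defect in the reinterpretation: a fixed point of your modified operator solves the boundary value problem with condition $x(a+h)=K\LI x(v_h)$, which is not problem \eqref{FDE}. The theorem, and the standing hypothesis $N\neq 0$ in \eqref{def:N}, concern the fixed data $b$, $v$, $K$; by moving the boundary points you prove existence and uniqueness for a family of different problems, whose relevant nondegeneracy condition is $N_h\neq 0$ rather than $N\neq 0$. The paper's proof keeps $F$ --- hence $\gamma_x$, $b$, $v$ and $N$ --- exactly as in \eqref{gammax} and only restricts the interval on which the supremum norm, and thus the contraction, is measured. Whatever one thinks of the resulting need to evaluate candidate functions at $b$ and $v$ (a point on which the paper itself is not fully rigorous), that is the reading under which the stated theorem is proved, and your proposal does not establish it.
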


\begin{proof} Let $h>0$ be such that
\begin{equation}\label{contraction}\sup_{t\in[a,a+h]}\left\{ \LI L(t)+\Big[\LI L(b)+K\LII L(v)\Big]\frac{(\p(t)-\p(a))^2}{N} \right\}<1,\end{equation}
and $U$  the set
$$U:=\{x\in C^2[a,a+h]: \, \LD x(t) \, \mbox{ exists and is continous in } [a,a+h]\}.$$
It is enough to prove that $F:U\to U$  is a contraction.
Let us see that $F$ is well-defined, that is, $F(U)\subseteq U$. Given a function $x\in U$, we have that the map $t\mapsto F(x)(t)$ is of class $C^2$ and
$$\LD F(x)(t)=f(t,x(t))$$
is continuous. Now, let $x_1,x_2\in U$ be arbitrary. Then,
\begin{align*}\|F(x_1)-F(x_2)\|&\leq\sup_{t\in[a,a+h]}\Big\{\LI |f(t,x_1(t))-f(t,x_2(t))| +
\Big[\LI |f(b,x_1(b))-f(b,x_2(b))|\\
& \quad\quad  +K \LII |f(v,x_1(v))-f(v,x_2(v))|\Big]\frac{(\p(t)-\p(a))^2}{N}\Big\}\\
&\leq \sup_{t\in[a,a+h]}\Big\{ \LI L(t)+\Big[\LI L(b)+K\LII L(v)\Big]\frac{(\p(t)-\p(a))^2}{N}\Big\}\|x_1-x_2\|,
\end{align*}
which proves that $F$ is a contraction. Using the Banach fixed point theorem, problem \eqref{FDE} has a unique solution.
\end{proof}

Recall that a family of functions $\Upsilon$ is equicontinuous if for every $\epsilon>0$, there exists some $\delta>0$ such that $d(x(t_1),x(t_2))<\epsilon$ for all $x \in \Upsilon$ and all $t_1,t_2$ such that  $d(t_1,t_2)<\delta$.

\begin{theorem}\label{Leray-Schauder}  Suppose that there exist
\begin{enumerate}
\item two continuous functions  $F_1,F_2:[a,b]\to\mathbb R$ and a nondecreasing function $\Lambda:\mathbb R_0^+\to\mathbb R_0^+$ such that, for all $t\in[a,b]$ and $x\in\mathbb R$, we have
$$|f(t,x)|\leq F_1(t)\Lambda(|x|)+F_2(t),$$
\item a real $R>0$ such that
$$\omega_1\Lambda(R)+\omega_2+|x_a|+\frac{|x^1_a|}{\p'(a)}(\p(b)-\p(a))+\Big|x_a\Big[1-\frac{K}{\Gamma(1+\a)}(\p(v)-\p(a))^\a\Big]$$
$$+\frac{x^1_a}{\p'(a)}\Big[\p(b)-\p(a)-\frac{K}{\Gamma(2+\a)}(\p(v)-\p(a))^{1+\a}\Big]  \Big| \frac{(\p(b)-\p(a))^2}{N}\leq R,$$
where
$$w_i:= \sup_{t\in[a,b]} \LI |F_i(t)|+(\LI |F_i(b)| +K \LII |F_i(v)| )\frac{(\p(b)-\p(a))^2}{N}, \quad i=1,2.$$
\end{enumerate}
Then, the Cauchy problem \eqref{FDE} has at least one solution in $[a,b]$.
\end{theorem}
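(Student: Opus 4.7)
The plan is to apply the Leray--Schauder nonlinear alternative to the operator $F:C[a,b]\to C[a,b]$ defined in \eqref{F}. By Theorem \ref{equiv}, fixed points of $F$ correspond exactly to solutions of \eqref{FDE}, so it suffices to produce one. With $R$ from hypothesis (2), set $U := \{x\in C[a,b]:\|x\| < R\}$; I will show (a) $F$ is completely continuous, and (b) no $x\in\partial U$ satisfies $x = \lambda F(x)$ for some $\lambda\in(0,1)$, after which the alternative forces a fixed point in $\overline{U}$.

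For (a), continuity of $F$ follows from continuity of $f$ together with boundedness of the operators $\LI$ and $\LII$ on $C[a,b]$: if $x_n\to x$ uniformly, then $f(\cdot,x_n(\cdot))\to f(\cdot,x(\cdot))$ uniformly, and $F(x)$ depends linearly on these fractional integrals evaluated at $t$, $b$, and $v$. For compactness, fix a bounded set $B\subset C[a,b]$ with $\|x\|\leq M$ for all $x\in B$. The growth hypothesis gives $|f(\tau,x(\tau))|\leq |F_1(\tau)|\Lambda(M)+|F_2(\tau)|$ uniformly in $x\in B$, so the coefficient $\gamma_x$ and the map $t\mapsto\LI f(t,x(t))$ are bounded uniformly in $t$ and $x$, and therefore so is $\|F(x)\|$. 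Equicontinuity reduces to uniform continuity of $\p(t)-\p(a)$ and $(\p(t)-\p(a))^2$ on $[a,b]$ (with $\gamma_x$ playing the role of a uniformly bounded coefficient) together with the classical continuity in $t$ of $\int_a^t \p'(\tau)(\p(t)-\p(\tau))^{\a-1}f(\tau,x(\tau))\,d\tau$, uniform in $x\in B$ thanks to the uniform bound on $f(\cdot,x(\cdot))$. Arzel\`a--Ascoli then gives relative compactness of $F(B)$.

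For (b), suppose $x=\lambda F(x)$ with $\lambda\in(0,1)$ and $\|x\|\leq R$. Termwise,
$$|F(x)(t)| \leq \LI |f(t,x(t))| + |x_a| + \frac{|x^1_a|}{\p'(a)}(\p(t)-\p(a)) + |\gamma_x|(\p(t)-\p(a))^2,$$
and the triangle inequality applied to \eqref{gammax} splits $|\gamma_x|$ into an $f$-dependent piece, contributing $\LI |f(b,x(b))|+K\LII |f(v,x(v))|$, and an explicit boundary-data piece involving $x_a$ and $x_a^1$. Substituting $|f(\cdot,x(\cdot))|\leq |F_1(\cdot)|\Lambda(R)+|F_2(\cdot)|$ (using monotonicity of $\Lambda$ with $\|x\|\leq R$) and then taking the supremum in $t\in[a,b]$ rearranges into exactly the left-hand side of hypothesis (2), so $\|F(x)\|\leq R$. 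Hence $\|x\|=\lambda\|F(x)\|<R$, and no $x\in\partial U$ can satisfy $x=\lambda F(x)$.

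By the Leray--Schauder nonlinear alternative, $F$ has a fixed point in $\overline{U}$, which by Theorem \ref{equiv} is a solution of \eqref{FDE} on $[a,b]$. The main obstacle is the bookkeeping in (b): one must group the contributions from $\LI |f(t,x(t))|$ and from $|\gamma_x|(\p(t)-\p(a))^2$ precisely into the pattern of $w_1,w_2$ and the explicit constants appearing in hypothesis (2), so that the chain of estimates lands on the prescribed right-hand side and allows monotonicity of $\Lambda$ to close the argument.
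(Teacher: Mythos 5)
Your proposal is correct and follows essentially the same route as the paper: the Leray--Schauder alternative applied to the operator $F$ of \eqref{F} on the ball of radius $R$, with the growth hypothesis and condition (2) giving the norm estimate $\|F(x)\|\leq R$, Arzel\`a--Ascoli (via a uniform bound on $f$ over the ball) giving compactness, and the estimate ruling out solutions of $x=\lambda F(x)$ on the boundary. The only differences are cosmetic: you work in $C[a,b]$ rather than the paper's $C^2[a,b]$ (arguably cleaner, since the sup norm is used in either case) and you verify the continuity of $F$ explicitly, a step the paper leaves implicit.
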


\begin{proof} Let $F$ be the functional given by the expression \eqref{F} and consider the closed ball
$$B_R:=\left\{ x\in C^2[a,b]\,:\, \| x\| \leq R  \right\}.$$
\textit{Step 1:} $F(B_R)\subseteq B_R$.

Given $x\in B_R$, we have the following
\begin{align*}\|F(x)\|& \leq \sup_{t\in[a,b]}\Big\{ \LI |f(t,x(t))|+|x_a|+\frac{|x^1_a|}{\p'(a)}(\p(t)-\p(a))+|\gamma_x|(\p(t)-\p(a))^2\Big\}\\
& \leq \sup_{t\in[a,b]} \LI |F_1(t)\Lambda(|x(t)|)+F_2(t)|+|x_a|+\frac{|x^1_a|}{\p'(a)}(\p(b)-\p(a)) +\Big[\LI |F_1(b)\Lambda(|x(b)|)+F_2(b)|\\
&\quad+K \LII |F_1(v)\Lambda(|x(v)|)+F_2(v)|+\Big|x_a\big[1-\frac{K}{\Gamma(1+\a)}(\p(v)-\p(a))^\a\big]\\
& \quad+\frac{x^1_a}{\p'(a)}\big[\p(b)-\p(a)-
\frac{K}{\Gamma(2+\a)}(\p(v)-\p(a))^{1+\a}\big]\Big|\Big]\frac{(\p(b)-\p(a))^2}{N}\\
& \leq\omega_1\Lambda(R)+\omega_2+|x_a|+\frac{|x^1_a|}{\p'(a)}(\p(b)-\p(a))+\Big|x_a\Big[1-\frac{K}{\Gamma(1+\a)}(\p(v)-\p(a))^\a\Big]\\
& \quad +\frac{x^1_a}{\p'(a)}\Big[\p(b)-\p(a)-\frac{K}{\Gamma(2+\a)}(\p(v)-\p(a))^{1+\a}\Big]  \Big| \frac{(\p(b)-\p(a))^2}{N}\leq R.
\end{align*}

\textit{Step 2:} $F(B_R)$ is equicontinuous.

Let $t_1,t_2\in[a,b]$ with $t_1>t_2$ and
$$M:=\max_{(t,x)\in[a,b]\times B_R}|f(t,x)|.$$
Then,
\begin{align*}|F(x)(t_1)-F(x)(t_2)|&\leq \frac{M}{\Gamma(\a+1)}\left[(\p(t_1)-\p(a))^\a-(\p(t_2)-\p(a))^\a \right]\\
& \quad +\frac{|x^1_a|}{\p'(a)}(\p(t_1)-\p(t_2))
+|\gamma_x|\left[(\p(t_1)-\p(a))^2-(\p(t_2)-\p(a))^2 \right]\end{align*}
which converges to zero as $t_1\to t_2$. By the Ascoli--Arzela Theorem, $F(B_R)$ is contained in a compact set.
Finally, we now prove that there are no $x\in \partial B_R$ and $\lambda\in(0,1)$ such that $x=\lambda F(x)$. For that purpose, suppose that such $x$ and $\lambda$ exist. In this case,
$$R=\|x\|< \|F(x)\|\leq R,$$
as we have seen before, which proves our claim. In conclusion, by the Leray--Schauder alternative, we prove the existence of a fixed point to $F$.
\end{proof}


\section{Examples}

\begin{example} Fix a kernel $\p:[0,1]\to\mathbb R$ and the fractional order $\a=2.5$. Consider the FDE
$$\left\{
\begin{array}{l}
{^CD_{0+}^{2.5,\p}} x(t)=\Gamma(4.5)(\p(t)-\p(0)), \quad t\in[0,1]\\
x(0)=0\\
x'(0)=0\\
x(1)=K{I_{0+}^{2.5,\p}} x(0.5),
\end{array}
\right.$$
where
$$K=\frac{6!(\p(1)-\p(0))^{3.5}}{\Gamma(4.5)(\p(0.5)-\p(0))^6}.$$
It is easy to check that the solution to this problem is the function $x(t)=(\p(t)-\p(0))^{3.5}$. We can apply Theorem \ref{BanachT}, by considering $L(t)=0$, for all $t$, and conclude about the existence and uniqueness of the solution in the interval $[0,1]$.
\end{example}

\begin{example} For our second example, consider the problem
$$\left\{
\begin{array}{l}
{^CD_{0+}^{2.5,\p}} x(t)=x(t), \quad t\in[0,1]\\
x(0)=1\\
x'(0)=0\\
x(1)=K{I_{0+}^{2.5,\p}} x(0.5),
\end{array}
\right.$$
where
$$K=\frac{E_{2.5}((\p(1)-\p(0))^{2.5})}{E_{2.5}((\p(0.5)-\p(0))^{2.5})-1}.$$
We assume that condition \eqref{def:N} is satisfied.
The solution is the function $x(t)=E_{2.5}((\p(t)-\p(0))^{2.5})$. We can apply Theorem \ref{BanachT}, by considering $L(t)=1$. Alternatively, applying Theorem \ref{Leray-Schauder} with $F_1(t)=1$, $F_2(t)=0$ and $\Lambda(x)=x$, we conclude about the existence of the solution on the interval [0,1].
\end{example}

\section{Conclusion}

In this paper we deal with fractional differential equations of order $\a\in(2,3)$, with initial and end-point conditions. First, we prove an equivalent problem by rewritten the FDE as a Volterra integral equation. Then, using some known fixed point theorems, we prove the existence and uniqueness of solution. Using the techniques exemplified in this paper, other fixed point theorems could be applied.
\section*{Acknowledgments}

Work supported by Portuguese funds through the CIDMA - Center for Research and Development in Mathematics and Applications, and the Portuguese Foundation for Science and Technology (FCT-Funda\c{c}\~ao para a Ci\^encia e a Tecnologia), within project UID/MAT/04106/2013.  The author is very grateful to two anonymous referees, for valuable remarks and comments that improved this paper.



\end{document}